\begin{document}

\title*{On a stochastic Ricker competition model}
\author{G\"oran H\"ogn\"as}
%
\institute{G\"oran H\"ogn\"as \at Centre of Excellence in Optimization and Systems Engineering and Department of Mathematics, \AA bo Akademi University, 20500 \AA bo, Finland, \email{ghognas@abo.fi}}
%
%
\maketitle

\abstract*{We model the evolution of two populations $U_t, V_t $ in competition by a two-dimensional size-dependent branching process. The population characteristics are assumed to be close to each other, as in a resident-mutant situation. 
Given  that $U_t = m$ and $V_t = n$ the expected values of $U_{t+1}$ and $V_{t+1}$ are given by $me^{r - K(m + bn)} $ and $ne^{\tilde r - \tilde K (n + am)}$,
respectively, where $r , \tilde r$ model the intrinsic population growth, $K, \tilde K $ model the force of inhibition on the population growth by the present population (such as scarcity of food), and $ a, b $ model the interaction between the two populations. For small $K, \tilde K
$ the process typically follows the corresponding deterministic Ricker competition model closely, for a very long time. 
Under some conditions, notably a mutual invasibility condition, the deterministic model has  a coexistence fixed point in the open first quadrant.  The asymptotic behaviour is studied through the
quasi-stationary distribution of the process. We initiate a study of those distributions as
the inhibitive force $K, \tilde K$ approach 0.}

\abstract{We model the evolution of two populations $U_t, V_t $ in competition by a two-dimensional size-dependent branching process. The population characteristics are assumed to be close to each other, as in a resident-mutant situation. 
Given  that $U_t = m$ and $V_t = n$ the expected values of $U_{t+1}$ and $V_{t+1}$ are given by $me^{r - K(m + bn)} $ and $ne^{\tilde r - \tilde K (n + am)}$,
respectively, where $r , \tilde r$ model the intrinsic population growth, $K, \tilde K $ model the force of inhibition on the population growth by the present population (such as scarcity of food), and $ a, b $ model the interaction between the two populations. For small $K, \tilde K
$ the process typically follows the corresponding deterministic Ricker competition model closely, for a very long time. 
Under some conditions, notably a mutual invasibility condition, the deterministic model has  a coexistence fixed point in the open first quadrant.  The asymptotic behaviour is studied through the
quasi-stationary distribution of the process. We initiate a study of those distributions as
the inhibitive force $K, \tilde K$ approach 0.
}

\section{Introduction}
\label{sec:1}

A general competition model between $k$ different species put forward in \cite{Schr} is
\begin{equation}
X^i_{t+1} = f_i(X_t, \xi_t)X_t^i, \enskip 1,2, \ldots k, \enskip t=0,1, \ldots
\end{equation}
where the state space {\bf S} is a closed subset of $\Re_+^k$ and the coordinate
axes form the extinction set $\bf S_0 $; the $\xi$'s represent a randomly evolving environment.

Since we concentrate on the two-dimensional case, $k = 2$, it will be more convenient to call  the components $X$ and $Y$, respectively. For a corresponding deterministic system we will use  lower case
$x$ and $y$ instead.

There are many different $f_i$ (growth factor of the $i$th species) proposed in the 
literature, e.\ g., the Leslie-Gower competition model (a generalized Beverton-Holt model)
which in its deterministic form may be written
\begin{eqnarray*} 
x_{t+1} = b_1 {1 \over {1 + x_t + c_1y_t}}x_t \\
y_{t+1} = b_2 {1 \over {1 + c_2x_t + y_t}}y_t .\\
\end{eqnarray*}
see \cite{Cush}. Here we consider a {\it Ricker-type} function, first in a deterministic form
\begin{eqnarray*} x_{t+1} = x_te^{r-x_t-by_t} \\
y_{t+1} = y_te^{\tilde r - ax_t - y_t},
\end{eqnarray*}
where $a$ and $b$ are taken close to 1, when the populations have largely the same characteristics, as in a resident-mutant situation. 

In the terminology of (1), with the randomness suppressed, we have here $f_1(x,y) = e^{r-x-by} $ and $f_2(x,y) = e^{\tilde r - ax - y} $. Denote
\begin{equation}\label{ricker}
F(x,y) = (xf_1(x,y), yf_2(x,y)) = (xe^{r-x-by}, ye^{\tilde r - ax - y}) 
\end{equation}
A very thorough study of the stability
properties of this map $F$, the {\it Ricker competition model} RCM, is provided by \cite{Luis}, see also \cite{Balr1}, \cite{Balr2}.

\bigskip
The one-dimensional Ricker model, introduced in \cite{Rick}
\begin{eqnarray*}
x_{t+1} = x_t e^{r - Kx_t}, \enskip t=0,1,2, \ldots 
\end{eqnarray*}
models the evolution of the population density of a species. 
\begin{itemize}

\item{$x_t$ is the density at time
$t$ or in the $t$th generation,}
\item{$r$ (or $e^r$) is the {\it birth rate, rate of natural increase,}}
\item{$K$ models the strength of an inhibitive factor, e.\ g., from internal competition
for scarce resources (in Ricker's original paper derived from predation of the adult upon the young),}
\item{the origin and the point $r \over K$ are the fixed points of the system.}
 \end{itemize}

\medskip
A key fact is: If $r $ and $x_0 $ are positive, the arithmetic averages of $x_t$ of the one-dimensional Ricker model {\it always} approach $r \over K$.

\medskip We note that if we pass to the normed process $Kx_t$ ($Kx$ being the new $x$) we can simplify the
one-dimensional Ricker equation to
\begin{equation}\label{rickernorm}
 x_{t+1} = x_t e^{r - x_t} .
 \end{equation}

\section{Demographic stochasticity}

In this section we introduce a two-dimensional competition model  which is essentially the
RCM as in the Introduction combined with {\it demographic stochasticity}.

Let $q$ and $\tilde q$ be two offspring distributions with means $e^r$ and $e^{\tilde r}$, respectively. We also assume that the have finite variances $\sigma^2$ and ${\tilde \sigma}^2$. 
Let $U_t$ and $V_t$ be two competing populations (modeled as branching processes) of the form
\begin{eqnarray}\label{dem} 
U_{t+1} = \sum_{j=1}^{U_t} \xi_{j,t}, \enskip t=0,1,2, \ldots \\
V_{t+1} = \sum_{j=1}^{V_t} \tilde \xi_{j,t} \enskip t =0,1,2, \ldots .
\end{eqnarray}
If $U_t = 0 $ ($V_t = 0$) then we define $U_{t+1} = 0$ ($V_{t+1} = 0$). In the definitions above the $\xi$'s and $\tilde \xi$'s are  independent nonnegative random variables with
distributions depending on the $t$th generation values:
\begin{eqnarray*}
{\bf P}\lbrace \xi_{1,t} = k \enskip | \enskip U_t = m, V_t = n \rbrace = \exp(-K(m+bn))q_k, \enskip k=1,2,3, \ldots \\
{\bf P}\lbrace \tilde \xi_{1,t} = k \enskip | \enskip U_t = m, V_t = n \rbrace = \exp(-\tilde K (am + n))\tilde q_k, \enskip k=1,2,3, \ldots
  \end{eqnarray*}
  when $m$ or $n$ are positive integers. $a$ models the relative 
influence of individuals of the $U$-species on the size of the litters $\tilde \xi$ of the $V$-species, and {\it vice
versa} for $b$.
\medskip
 
  The corresponding conditional probability of $\xi_{1,t} = 0 $ is given by
  the expression
  \begin{equation}\label{probzero}
  1 - \exp(-K(m+bn))(1-q_0)
  \end{equation}
  and similarly for $\tilde \xi_{1,t} $.
  
  \bigskip
$(U_t,V_t)$ is a Markov chain on the nonnegative quadrant of ${\bf Z}^2 $.  In principle, all values are attainable in one step from any interior point. Thus the interior of ${\bf Z}_+^2 $ constitutes
one communicating aperiodic irreducible class.

  \medskip
The conditional means of the normed processes $KU_{t+1}$ and $\tilde K V_{t+1} $ given
 $KU_t = x, \tilde K V_t=y $ are 
\begin{eqnarray}\label{means}
  x \exp (r-x-b{ K\over \tilde K}y), \quad
  y \exp (\tilde r - a{\tilde K \over K}x - y)
  \end{eqnarray}
  showing close analogy with the deterministic cases (\ref{ricker},\ref{rickernorm}).
  - Let us modify our definition of $F$ in (\ref{ricker}) slightly, replacing $a$ by
  $a{\tilde K \over K}$ and $b$ by $b {K \over \tilde K} $:
  \begin{equation}\label{rickermod}
  F(x,y) = (x \exp(r - x - b{ K \over \tilde K}y), y \exp(\tilde r - a{\tilde K \over K}x -y))
  \end{equation}
\medskip
  
  The conditional variance of $KU_{t+1}$ is
  \begin{equation}\label{variance}
  Kx\exp(-x - b {K \over \tilde K} y)\lbrace \sigma^2 + e^{2r}(1 - e^{-x-b{K\over \tilde K}y})\rbrace
  \end{equation}
  with a similar formula for $\tilde K V_{t+1} $. 
  
  \medskip
  Let us call the normed process $(X_t, Y_t)$; $X_t = KU_t$, $ Y_t = \tilde K V_t$. This is  the two-dimensional stochastic Ricker competition model to be investigated in this paper.

\medskip
For small values of $K$ and $\tilde K$ the $(X_t,Y_t)$ chain follows the path of
the deterministic process with very high probability: The formula (\ref{variance}) shows that the standard deviations are of  the order of $\sqrt K , \sqrt{\tilde K} $. Thus the process, whose both components are sums of independent random 
variables, is approximately normal with small standard deviation. More concretely,
suppose we start the normed process at $(x,y)$, then the next step is approximately normal
with mean given by (\ref{means}) and a standard deviation which can be made very small.
As shown in \cite{Kleb, qsd} we can infer the same thing for a finite number of steps,
$(X_t, Y_t) \approx F^t(X_0,Y_0)$ for $t=1,2, \ldots k $, when $K, \tilde K$ are small
enough. 
The difference is a 
sum of independent random variables and approximately bi-variate normal. 

We see from (\ref{probzero}) that the  extinction probability of the $X$-process in one step
given $X_t = x, Y_t = y$
is of the form
\begin{equation}
((1 - \exp(-x - b{K \over \tilde K}y)(1 - q_0))^{x \over K} \ge \delta^{1 \over K}
\end{equation}
where the positive number $\delta = 2^{-\log 2}$  is the minimum of the function 
$(1-e^{-x})^x $. 

Since the $Y$-component behaves similarly, we can infer that the extinction probability of our
bivariate stochastic process in one step is at least
\begin{equation}\label{delta}
\delta^{ {1 \over K} + {1 \over \tilde K}} 
\end{equation}
regardless of the present state of the process. Thus it has a finite life-time almost surely.

\section{Mutual invasibility}

The two-dimensional deterministic system $F$ defined by (\ref{rickermod}) has fixed points
at the origin (corresponding to the case when both species are extinct), at the point
$(r,0) $(the $y$-component is extinct and the $x$-component at its equilibrium), at $(0,\tilde r) $ and
possibly at
\begin{eqnarray}\label{fix}
(x^*, y^*) = ({{r - b{ K \over \tilde K}\tilde r} \over {1 - ab}}, {{\tilde r - a{\tilde K \over K}r} \over {1 - ab}} ). 
\end{eqnarray}    

We will assume conditions which guarantee the existence of the
fixed point  in the open first quadrant. $(x^*, y^*)$ is then called the {\it coexistence fixed point}.

Suppose $y_0 = \tilde r $ (seen, e.\ g., as the resident population density)) and $x_0$ (the mutant population density) is very small. Then the first component $x_1$  of $(x_1,y_1) = F(x_0,y_0)$ is larger than $x_0$ 
 if the condition $r > b {K \over \tilde K} \tilde r $ is met. The interpretation is that the mutant $x$ can increase from small values while the resident population is at its equilibrium. We say that $x$ can {\it invade} $y$. Conversely, $y$ can invade $x$ if
 $\tilde r > a {\tilde K \over K} r $.
 
 We will impose two conditions: \begin{equation}\label{ab} ab < 1
\end{equation} and  the {\it mutual invasibility condition}
\begin{equation}\label{mutual}
r > b {K \over \tilde K} \tilde r ,\quad
\tilde r > a {\tilde K \over K} r .
\end{equation}

It is immediately seen that this is sufficient to ensure the existence of a coexistence fixed point $(x^*, y^*)$ in the open first quadrant. The fixed point may be repelling or 
attracting; the conditions can be found in \cite{Luis}. 
\medskip

Remark.
If the population characteristics are very close to each other one could well choose $a=b=1$. In \cite{Fager} we investigated such a case: there is no coexistence fixed point so one of
the two populations will eventually take over completely.

\medskip

Without regard to the attractivity properties of the fixed points, the conditions (\ref{ab},\ref{mutual}) guarantee the existence of an invariant compact set $C$ inside the open first
quadrant: $F(C) \subset C $. For the detailed calculations, see \cite{invar}.

In \cite{invar} it is also proved that there exists a compact set $C_1$ such that it is
mapped "far inside" $C_1$ by some iterate $F^N$ of $F$. To be more precise, there is a compact set $C_1$ inside the open
first quadrant such that the Euclidean distance between the outside of $C_1$ and $F^N(C_1)$ is
strictly positive:
\begin{equation}\label{dist}
 d(C_1^c, F^N(C_1)) > 0 .
\end{equation}
 Another way of putting it is to say that $F^N(C_1)$ is farther away from the coordinate axes than $C_1$.
 
 \section{The quasi-stationary distribution}
 
 The analysis of the process $(U_t, V_t)$ and its normed version $(X_t, Y_t)$ follows 
 the outline in \cite{qsd}.
 
 The Markov chain $(U_t, V_t)$ in the interior of the first quadrant $\bf Z_+^2$ was found above to have a finite lifetime a.s., see (\ref{delta}). For any $(m,n)$, both components
positive, the probability of moving straight to the origin is  the product of expressions such as (\ref{probzero}):
$$    (1 - \exp(-K(m+bn))(1-q_0))^m (1 - \exp(-\tilde K(am+n))(1-\tilde q_0))^n $$
which can be minorized by an expression of the form
\begin{equation}
(1 - \exp(-L(m+n))(1-\bar q_0))^{m+n} \approx 1 - (m+n) \exp(-L(m+n))(1-\bar q_0)
\end{equation} 
where $L$ is taken such that $L(m+n) \le \min(K(m+bn), \tilde K(am+n))$ and $\bar q_0 =
\max(q_0, \tilde q_0)$.

Following the corresponding proof in \cite{qsd} we can now draw the conclusion that the entries of the transition probability matrix $Q$ of the Markov chain restricted to the strictly positive
states $(m,n)$ have a finite sum. The sub-markovian transition matrix has a spectral radius (the Krein-Rutman maximal eigenvalue $\lambda$) bounded above by an expression of the form $$ 1 - \delta^{{1 \over K }+ {1 \over \tilde K} }$$ where $\delta $ again is the strictly positive minimum of the function $(1 - e^{-x})^x $. This is a rough upper bound because the
complement of the all-positive states is the union of the non-negative coordinate  axes, and not just the origin $(0,0)$.

The Markov chain restricted to the strictly positive states admits a {\it quasi-stationary distribution} (qsd) $\pi$. One of the characterizations of the qsd is that it is the left 
eigenvector corresponding to the dominant eigenvalue $\lambda$: $$ \lambda \pi =  \pi Q .$$
Another is the following invariance property: If the initial distribution is $\pi$ then the conditional distribution of the first step given that it is all-positive is $\pi$, too:
\begin{eqnarray*}
{\bf P}_{\pi} \lbrace (U_1,V_1) = (\cdot, \cdot) \enskip| \enskip U_1 > 0, V_1 > 0 \rbrace = \pi(\cdot, \cdot) 
\end{eqnarray*}
where we use the notation ${\bf P}_{\pi} $ to indicate that the initial distribution
(the distribution of $(U_0, V_0)$) is $\pi$.

\section{Asymptotics}

Our asymptotic analysis follows the reasoning in \cite{qsd} closely. The quasi-stationary
distribution $\pi$ depends on $K$ and $\tilde K$. (We assume $a$ and $b$ to be constant.) As $K$ and $\tilde K$ go to 0 the family
of probability measures $\pi_{K, \tilde K} $ is {\it tight} provided $K$ and $\tilde K$ vary boundedly in such a way that the fixed point $(x^*, y^*)$ in (\ref{fix}) stays in a compact
set within the interior of the first quadrant. For simplicity we take $K = \tilde K $, $0 < K < 1$, from now on. 

\begin{theorem}\label{tight} Let $K = \tilde K$. Let (\ref{ab},\ref{mutual}) be satisfied. Consider the Markov chains $(X_t, Y_t)$ for $K > 0 $ and their quasi-stationary distributions $\pi_K$.
The family of probability measures $ \pi_K, \enskip K > 0 $, is tight. 
\end{theorem}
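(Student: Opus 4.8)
\medskip\noindent\textbf{Proof plan.}
The plan is to read off the concentration of $\pi_K$ from the near-deterministic behaviour of the chain, using the eigenvalue relation $\lambda_K^N\pi_K=\pi_K Q_K^N$ and the trapping set of (\ref{dist}). For each fixed $K$ the measure $\pi_K$ is carried by the countable lattice $(K\mathbf{Z}_+)^2$ and is therefore tight on its own, so the entire issue is \emph{uniformity} as $K\downarrow 0$, the regime in which the lattice becomes fine and the deterministic bulk recedes to $\sim 1/K$ individuals. It thus suffices to exhibit, for every $\varepsilon>0$, a compact $C$ in the open first quadrant with $\sup_K\pi_K(C^c)\le\varepsilon$; mass can leak only towards infinity or towards the coordinate axes, and I treat these in turn.

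The leakage to infinity is the easy half and is governed by Ricker overcompensation. Applying the one-step identity $\lambda_K\pi_K(A)=\int Q_K((x,y),A)\,d\pi_K$ to $A=\{X>e^{r-1}+\gamma\}$, I use that the conditional mean of $X_1$ never exceeds $\max_{x}xe^{r-x}=e^{r-1}$, while by (\ref{variance}) its conditional variance is $O(K)$ uniformly in the current state; Chebyshev then gives $Q_K((x,y),A)=O(K/\gamma^2)$ uniformly, whence $\pi_K(A)=O(K/(\gamma^2\lambda_K))\to 0$, and likewise for $Y$. So $\pi_K$ concentrates on a fixed box $B=[0,M]\times[0,M']$, and the remaining task is to confine the mass away from the axes.

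For the axes I invoke (\ref{dist}): let $N$ and $\rho:=d(C_1^c,F^N(C_1))>0$ be as there. Over these $N$ steps the chain stays within $O(\sqrt K)$ of the deterministic orbit wherever the populations are macroscopic, so $F^N(C_1)$ lying a distance $\rho$ inside $C_1$ forces $\sup_{(x,y)\in C_1}Q_K^N((x,y),C_1^c)=:\eta(K)\to 0$ (the chain both stays inside $C_1$ and avoids the axes); the same estimate gives $\lambda_K^N\ge 1-\eta(K)$. Feeding $A=C_1^c$ into $\lambda_K^N\pi_K(A)=\int Q_K^N((x,y),A)\,d\pi_K$ and splitting the domain into $C_1$, $C_1^c\cap B$ and $C_1^c\cap B^c$ yields
\[
\lambda_K^N\,\pi_K(C_1^c)\le \eta(K)+\theta\,\pi_K(C_1^c)+\pi_K(B^c),
\qquad \theta:=\sup_{(x,y)\in C_1^c\cap B}Q_K^N((x,y),C_1^c).
\]
Since the previous paragraph makes $\pi_K(B^c)\to 0$, once $\theta<1$ holds uniformly for small $K$ we get $\pi_K(C_1^c)\le(\eta(K)+\pi_K(B^c))/(\lambda_K^N-\theta)\to 0$, and as $C_1$ is compact in the open quadrant this is tightness. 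On the annular part of $C_1^c\cap B$ (both species well above $O(1)$ individuals but outside $C_1$) the uniform bound $\theta<1$ comes from the deterministic drift — overcompensation from large values, and the push away from the axes furnished by mutual invasibility (\ref{mutual}) — which returns the chain to $C_1$ within $N$ steps; on the tiny-population boundary it comes from a $K$-independent lower bound on the one-step extinction (killing) probability, the inhibition factor there being $\approx 1$.

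The main obstacle is the intermediate \emph{mesoscopic} band along the axes, say $\{X<\varepsilon_0\}$ with $X$ corresponding to many but $o(1/K)$ individuals. There the two easy mechanisms both fail: the population is too numerous for a uniform extinction bound, yet a surviving rare species needs $\Theta(\log(1/K))\to\infty$ generations to climb back to the macroscopic scale, and along this expanding direction the $O(\sqrt K)$ fluctuations are amplified so violently that the fixed-$N$ deterministic approximation is useless. The resolution I would pursue replaces trajectory-tracking by the branching dichotomy: the rare component is, by (\ref{mutual}), a supercritical branching process, so conditionally on survival it grows geometrically and the event ``survive yet remain near the axis for the whole horizon'' has vanishing probability, which is what forces $\theta<1$ after choosing the horizon and the population cutoff compatibly. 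Some care is still needed where one species is \emph{transiently} suppressed by a momentarily large competitor, since the drift of the small species then points the wrong way for a single step; this is handled by passing to the multi-step map $F^N$, after which overcompensation has already deflated the competitor. A Foster--Lyapunov approach with $g(x,y)=x+y+x^{-1}+y^{-1}$ against the killed kernel $Q_K$, converting the invariance $\int Q_Kg\,d\pi_K=\lambda_K\int g\,d\pi_K$ into a uniform bound on $\int g\,d\pi_K$, is an attractive alternative but runs into exactly the same transient-suppression difficulty.
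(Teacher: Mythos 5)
Your treatment of the tail at infinity and of the bulk (the eigenvalue bound via the trapping set of (\ref{dist})) agrees with the paper's, but the step that confines the mass away from the axes --- the heart of the theorem --- has a genuine gap, and you have located it yourself. Your decomposition requires $\theta=\sup_{(x,y)\in C_1^c\cap B}Q_K^N\bigl((x,y),C_1^c\bigr)<1$ \emph{uniformly in small $K$} with $N$ \emph{fixed} by (\ref{dist}), and this is false. Take a mesoscopic initial state, say $X_0=\sqrt K$ (about $K^{-1/2}$ individuals) with $Y_0$ near $\tilde r$: the killed chain survives $N$ steps with probability tending to one (each of the $\sim K^{-1/2}$ individuals has extinction probability bounded away from one, and there are many of them), yet the conditional mean of $X_{t+1}$ is at most $e^rX_t$, so $X_N\lesssim\sqrt K\,e^{rN}\to 0$ and the chain cannot reach $C_1$, which sits at positive distance from the axes. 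Hence $\theta(K)\to 1$ and the inequality $\pi_K(C_1^c)\le(\eta(K)+\pi_K(B^c))/(\lambda_K^N-\theta)$ becomes vacuous. The branching-dichotomy repair you sketch (a horizon growing like $\log(1/K)$, geometric growth conditional on survival) is a research program, not a proof: the rare component is not a bona fide branching process, since its offspring mean $e^{r-x-by_t}$ is modulated by the resident, whose own dynamics can be oscillatory or chaotic, so supercriticality from (\ref{mutual}) is only a time-average statement, and your remark that ``passing to $F^N$'' disposes of transient suppression does not hold up because the competitor is repeatedly re-inflated along its orbit.

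The paper sidesteps all of this with a killing-rate comparison that requires no recovery or return argument whatsoever. On a narrow strip $S=\{0<x<\varepsilon_0\}$ the one-step extinction probability of the $X$-component is at least $\bigl[(1-e^{-x})^x\bigr]^{1/K}\ge e^{-v/K}$, where $v=v(\varepsilon_0)\to 0$ as $\varepsilon_0\to 0$ because $(1-e^{-x})^x\to 1$ as $x\downarrow 0$; this bound is uniform over the strip, and in particular covers your mesoscopic band (the exponent $x/K$ is large there, but the base is correspondingly close to $1$). On the other hand, when the chain is started from $\pi_K$, its per-step killing probability is \emph{exactly} $1-\lambda(K)\le e^{-u/K}$, with $u>0$ supplied by the trapping-set estimate you already have, i.e.\ (\ref{lambda}). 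Choosing $\varepsilon_0$ so small that $v<u$ and integrating the one-step killing probability against $\pi_K$ gives $e^{-v/K}\pi_K(S)\le 1-\lambda(K)$, hence $\pi_K(S)\le e^{-(u-v)/K}\to 0$, and symmetrically for a strip along the $x$-axis. In short: replace your ``uniform return probability $\theta<1$'' step, which fails, by the observation that near an axis the chain is killed \emph{much faster} than the quasi-stationary average allows; the rest of your argument then goes through essentially as the paper's does.
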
 

\begin{proof} Recall that the conditional mean of $(X_{t+1},Y_{t+1})$ given that $(X_t, Y_t)=
(x,y) $ is $F(x,y) \equiv (xf_1(x,y), yf_2(x,y))$, (\ref{ricker}), and the variance $v_x$
of its first component is
  \begin{equation}\label{variancemod}
  Kx\exp(-x - b y)\lbrace \sigma^2 + e^{2r}(1 - e^{-x-by})\rbrace
  \end{equation}with a similar formula for the second component, see (\ref{variance}).  

Following \cite{qsd}, Section 3, let $S(s)$ be the moment generating function (mgf) of the offspring distribution $q$ and $c_{(x,y)}(s) $  
the mgf of the variables $\xi_{j,t}$ of (\ref{dem}) given that $X(t)=x$ and $Y(t)= y$.
The entropy function $c_{(x,y)}^*(z)$ of  $\xi_{j,t}$ is $\sup_{s \in \Re}[zs - c_{(x,y)}(s)] $. As in \cite{qsd}, pp.\ 252-253, we get, for any positive $a$,
$$ {\bf P}\lbrace X_{t+1} > a + xf_1(x,y) \enskip | \enskip X_t = x, Y_t = y \rbrace
\le \exp( - s ({a \over K} - {x \over K}f_1(x,y)) + {x \over K} c_{(x,y)}(s)) $$
which is at most
$$ \exp(- {x \over K}c_{(x,y)}^*({a \over x} + e^{r-x-by})) .$$
The entropy function is 0 at the conditional mean $e^{r-x-by}$ of $\xi_{j,t}$ and at least
${1 \over {3v_x}}({a \over x})^2 $ for small $a$.

Since the variance varies continuously with $x$ and $y$ we can conclude that the exponent
can be bounded away from 0 when $(x,y)$ lies in a compact set in the interior of the first
quadrant.

Thus, given a small positive $a$,  there exists a $w > 0$ such that
\begin{equation} \label{expbound}
{\bf P} \lbrace X_{t+1} > xf_1(x,y) + a \enskip | \enskip X_t=x, Y_t=y \rbrace \le 
\exp(-{w \over K}) 
\end{equation}
(and similarly for $Y_{t+1} $) for all $(x,y)$ in a given compact set in the open first quadrant.

Putting it in another way, we have 
\begin{equation} \label{expboundnbhd}
{\bf P} \lbrace (X_{t+1},Y_{t+1}) \in U(F(x,y)) \enskip | \enskip (X_t, Y_t) = (x,y) \rbrace \ge 
1 - \exp(-{w \over K}) 
\end{equation}
where $U(F(x,y))$ is a neighbourhood of the point $F(x,y)$.

If we take as our compact set an invariant set $C$, such as the one constructed in \cite{invar}, and as $U(C)$ an open set containing $C$ we
get
\begin{equation} \label{expboundU}
{\bf P} \lbrace (X_{t+1},Y_{t+1}) \in U(C) \enskip | \enskip (X_t, Y_t) \in C \rbrace \ge 
1 - \exp(-{w \over K}) .
\end{equation}

Since $F$ is continuous we can extend the above relation, by appropriately adjusting the
quantity $w > 0$, to the $N$th iterate of $F$:
\begin{equation} \label{expbounditer}
{\bf P} \lbrace (X_{t+N},Y_{t+N}) \in U(C) \enskip | \enskip (X_t, Y_t) \in C \rbrace \ge 
1 - \exp(-{w \over K}) .
\end{equation}

Now we let $N$ and $C_1$ be as in \cite{invar}, i.\ e., the interior of $C_1$ is an open 
set containing $F^N(C_1)$. Then we obtain
\begin{equation} \label{expboundN}
{\bf P} \lbrace (X_N,Y_N) \in C_1 \enskip | \enskip (X_0, Y_0) \in C_1 \rbrace \ge 
1 - \exp(-{w \over K}) 
\end{equation}
from which we can draw the conclusion, as in \cite{qsd}, that the dominant eigenvalue $\lambda = \lambda (K)$ of
the transition probability matrix of the Markov chain $(X_t,Y_t)$ restricted to the
positive quadrant of $\Re^2$, differs "exponentially little" from 1:
\begin{equation}\label{lambda}
1 - \exp(-{ u \over K}) < \lambda (K)  < 1 
\end{equation}
 for some $u > 0 $.

\medskip
Starting the Markov chain from the point $(X_0, Y_0) = (x,y)$ the distribution of $(X_1, Y_1)$
has mean $F(x,y)$ and a uniformly bounded variance, see (\ref{variancemod}). Hence there is
a compact set $B \subset \Re^2$ such that the
probability of $(X_1, Y_1)$ being outside of $B$ is $ < \varepsilon$ for all $(x,y)$ and all $K$. As in \cite{qsd} we conclude that $\pi_K (B^c) < 2 \varepsilon$.

\medskip
In a strip $S$ close to the $y$-axis, the function $(1 - e^{-x})^x $ approaches  1, it is $ > e^{-v} $ for some small positive $v$. Take $v$ so small that $0< v < u$ where $u$ satisfies (\ref{lambda}) above. The probability
of immediate exit from the state space (the open first quadrant) is at least $e^{-{v \over K}}$ starting from a point in $S$. If  the initial point is chosen according to law $\pi_K$ 
then the exit probability is $1 - \lambda (K) $ (by the definition of qsd). As in \cite{qsd}
we use (\ref{lambda}) to conclude that $$\pi_K(S)  < \exp(-{{u-v}\over K}) $$ which can be made $ < \varepsilon$
for $K$ small enough. A similar reasoning can be applied to a strip above the $x$-axis.

\medskip
The above remarks show that, for given positive $\varepsilon$, there exists a compact set inside the open first quadrant whose
$\pi_K$-measure is $> 1-4\varepsilon $ for all $K$ small enough.  

\medskip
Thus the family $\pi_K, \enskip K>0 $, is tight.

\end{proof}

\begin{theorem}\label{attr} The limit measures of $\pi_K$ as $K \to 0$ are invariant with respect to the
deterministic system $F$. If the coexistence fixed point $(x^*, y^*)$ of (\ref{fix}) is
attracting then the quasi-stationary distributions converge to the point mass at the fixed point: $$\lim_{K \to 0} \pi_K  = \delta_{(x^*,y^*)} .$$
\end{theorem}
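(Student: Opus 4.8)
The plan is to argue through subsequential weak limits. By Theorem~\ref{tight} the family $\{\pi_K\}$ is tight, so by Prokhorov's theorem every sequence $K_n\to 0$ has a subsequence along which $\pi_{K_n}$ converges weakly to some probability measure $\pi$. It suffices to show that every such limit is $F$-invariant, and that under the attractivity hypothesis it equals $\delta_{(x^*,y^*)}$; tightness together with uniqueness of the subsequential limit then upgrades this to convergence of the whole family as $K\to 0$.

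For invariance I would start from the eigenrelation $\lambda_K\pi_K=\pi_K Q_K$ of the qsd. Testing against a bounded continuous $\phi$ gives
$$ \lambda_K\int\phi\,d\pi_K=\int g_K(x,y)\,d\pi_K(x,y),\qquad g_K(x,y):={\bf E}_{(x,y)}\big[\phi(X_1,Y_1)\,\mathbf{1}_{\{X_1>0,\,Y_1>0\}}\big]. $$
As $K\to 0$ three things happen on a compact set $B$ inside the open first quadrant: $\lambda_K\to 1$ by (\ref{lambda}); the one-step law concentrates at $F(x,y)$, since by (\ref{variancemod}) its variance is $O(K)$; and the probability of hitting an axis in one step is exponentially small, because by (\ref{probzero}) it is of the form $c(x,y)^{x/K}$ with $c(x,y)<1$ bounded away from $1$ on $B$. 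Hence $g_K\to\phi\circ F$ uniformly on $B$. Splitting $\int(g_{K_n}-\phi\circ F)\,d\pi_{K_n}$ into its contributions from $B$ and $B^c$, the first vanishes by this uniform convergence and the second is dominated by $\|\phi\|_\infty\,\pi_{K_n}(B^c)$, which tightness makes small. Since $\phi\circ F$ is bounded and continuous, weak convergence gives $\int\phi\circ F\,d\pi_{K_n}\to\int\phi\circ F\,d\pi$, while $\lambda_{K_n}\int\phi\,d\pi_{K_n}\to\int\phi\,d\pi$. Letting $n\to\infty$ yields $\int\phi\,d\pi=\int\phi\circ F\,d\pi$ for all such $\phi$, i.e.\ $F$-invariance.

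For the second assertion I would first locate the support of $\pi$. The strip estimates in the proof of Theorem~\ref{tight} give $\pi_K(S)\to 0$ for thin open strips $S$ along the axes; the Portmanteau theorem applied to these open sets then forces $\pi$ to charge no such strip, so $\mathrm{supp}\,\pi$ is a compact subset of the open first quadrant (and is forward invariant under $F$, since $F_*\pi=\pi$). Iterating invariance gives $\int\phi\circ F^n\,d\pi=\int\phi\,d\pi$ for all $n$. When $(x^*,y^*)$ is attracting, under (\ref{ab},\ref{mutual}) its basin exhausts the open first quadrant (the relevant global-attraction statement for the RCM being in \cite{Luis}), so $F^n(x,y)\to(x^*,y^*)$ for $\pi$-a.e.\ $(x,y)$. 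Dominated convergence then gives $\int\phi\,d\pi=\lim_n\int\phi\circ F^n\,d\pi=\phi(x^*,y^*)$ for every bounded continuous $\phi$, i.e.\ $\pi=\delta_{(x^*,y^*)}$, completing the argument.

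I expect the delicate point to be the interchange of limits in the invariance step: the integrand $g_K$ and the measure $\pi_K$ both depend on $K$, so everything hinges on the \emph{uniform}-on-compacts concentration of the one-step kernel together with the exponentially small escape probability, patched to the tails via tightness. The second genuinely dynamical input is that attractivity of the coexistence fixed point must force convergence $F^n\to(x^*,y^*)$ everywhere on $\mathrm{supp}\,\pi$; without excluding other interior invariant sets (such as periodic orbits) one only learns that $\pi$ is carried by the non-wandering set, so at this point one must lean on the stability analysis of \cite{Luis}.
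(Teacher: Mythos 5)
Your proposal is correct and follows essentially the same route as the paper's (much terser) proof sketch: invariance of subsequential limits via the qsd eigenrelation and concentration of the one-step kernel (which is exactly what the cited Step 5 of \cite{qsd} carries out), followed by identification of the limit using the fact that attractivity of $(x^*,y^*)$ is in fact \emph{global} under the stated conditions, so that $\delta_{(x^*,y^*)}$ is the unique $F$-invariant measure in the open first quadrant. The only difference is bookkeeping: the paper attributes the local-implies-global stability statement to \cite{Balr2} rather than \cite{Luis}, and you correctly flag this global attractivity as the essential dynamical input.
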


 \begin{proof} (Sketch) The invariance of the limit measures is proved as in Step 5 of
 \cite{qsd}, pp.\ 254f. We know, see \cite{Luis}, that the coexistence fixed point
 $(x^*, y^*)$ is attracting if $2(1-b)\tilde r + 2(1-a)r - 4(1-ab) \le (r-b\tilde r)
 (\tilde r - ar) < (1-b)\tilde r + (1-a) r $. Also, by \cite{Balr2}, if this 
condition is satisfied it is globally attracting so that the point mass at $(x^*, y^*)$ is the unique invariant measure for the deterministic system $F$ 
 inside the open first quadrant of $\Re^2$.

\end{proof}

\section{Conclusion and outlook}

Theorems \ref{tight} and \ref{attr} show that the branching process $(U_t, V_t)$ and its
normed version $(X_t, Y_t)$, even if it eventually goes extinct, emulates the deterministic process for a long time. The dominant eigenvalue is within $e^{-{u\over K}} $ of 1 so the
expected lifetime $$ {1 \over {1-\lambda(K)} }$$ of the process is exponential in ${1 \over K} $. When the coexistence fixed point $(x^*, y^*)$
is attracting we can say more: the quasi-stationary 
distribution is a "blurred version" of the point mass at $(x^*, y^*)$. The linear autoregressive
approximation of $(X_t, Y_t)$ in the vicinity of the fixed point, which was introduced by \cite{Kleb} in a similar one-dimensional case, can be used as in \cite{Jung} to obtain 
rough estimates of the quantity $u$ in (\ref{lambda}). This in turn gives us an estimate for the expected life-time.

If the coexistence fixed point is repelling, the picture is less clear. We know by
Theorem \ref{tight} that the qsd's do have limits but their precise nature is unknown. 
Simulations suggests a conjecture for $r, \tilde r$ not too large: the limits will be
supported by the attracting cycles of the deterministic system $F$ 
(as in the one-dimensional Ricker case, see \cite{qsd}). A successful completion of this program is likely to require a thorough understanding of the two-dimensional dynamical system generated by $F$.

 \bigskip

\medskip
\begin{acknowledgement}
The author has worked on Ricker competition models with Henrik Fagerholm, Mats Gyllenberg, and Brita Jung. The present paper was presented at the International Conference on Difference Equations and Applications ICDEA 2012 in Barcelona. Many discussions took place in the constructive and inspiring conference atmosphere. In particular, questions and comments by  Rafael Lu\'{i}s,
Eduardo Cabral Balreira, and Saber Elaydi are gratefully acknowledged. Travel funding was provided by the Magnus Ehrnrooth Foundation.
\end{acknowledgement}
\end{document}